\newcommand{\vertiii}[1]{{\left\vert\kern-0.25ex\left\vert\kern-0.25ex\left\vert #1
    \right\vert\kern-0.25ex\right\vert\kern-0.25ex\right\vert}}
\renewcommand*\subjclass[2][2000]{%
  \def\@subjclass{#2}%
  \@ifundefined{subjclassname@#1}{%
    \ClassWarning{\@classname}{Unknown edition (#1) of Mathematics
      Subject Classification; using '1991'.}%
  }{%
    \@xp\let\@xp\subjclassname\csname subjclassname@#1\endcsname
  }%
}
\newtheorem{theorem}{Theorem}[section]
\newtheorem{lemma}[theorem]{Lemma}
\newtheorem*{lemma*}{Lemma}
\newtheorem{corollary}[theorem]{Corollary}
\def\1ton{1,2,\ldots,n}
\theoremstyle{definition}
\theoremstyle{remark}
\numberwithin{equation}{section}
\renewcommand{\imath}{i} 
\def\XXint#1#2#3{{\setbox0=\hbox{$#1{#2#3}{\int}$}
\vcenter{\hbox{$#2#3$}}\kern-.5\wd0}}
\def\ge{\geqslant}
\begin{document}
\title{Zygmund theorem for harmonic quasiregular mappings }  \subjclass[2020]{Primary 30H10 }


\keywords{Harmonic mappings, quasiregular mappings, Riesz inequality, Zygmund inequality}
\author{David Kalaj}
\address{University of Montenegro, Faculty of Natural Sciences and
Mathematics, Cetinjski put b.b. 81000 Podgorica, Montenegro}
\email{davidk@ucg.ac.me}


\begin{abstract}
Let $K\ge 1$.
We prove Zygmund theorem for $K-$quasiregular harmonic mappings in the unit disk $\mathbb{D}$ in the complex  plane by providing a constant $C(K)$ in the inequality $$\|f\|_{1}\le C(K)(1+\|\mathrm{Re}\,(f)\log^+ |\mathrm{Re}\, f|\|_1),$$ provided that $\mathrm{Im}\,f(0)=0$. Moreover for a quasiregular harmonic mapping $f=(f_1,\dots, f_n)$ defined in the unit ball $\mathbb{B}\subset \mathbb{R}^n$, we prove the asymptotically sharp inequality
$$\|f\|_{1}-|f(0)|\le (n-1)K^2(\|f_1\log f_1\|_1- f_1(0)\log f_1(0)),$$ when $K\to 1$,  provided that $f_1$ is positive.

\end{abstract}
\maketitle
\tableofcontents
\section{Introduction}
In this paper $\mathbb{D}$ is the unit disk in the complex plane $\mathbb{C}$ and $\mathbb{B}$ is the unit ball in the Euclidean space $\mathbb{R}^n$. By $\mathbb{S}$ we denote the unit sphere.
\subsection{Hardy class}
Let $p>0$ and  $$\mathbf{h}^p(\mathbb{B})=\{f: \mathbb{B}\to \mathbb{R}^n, \text{so that } \|f\|_p=\sup_{0<r<1}M_p(r,f)<\infty\},$$ where  $$M_p(r,f)=\left(\int_{\mathbb{S}}|f(r\zeta)|^pd\sigma(\zeta)\right)^{1/p}.$$ Here $\sigma$ is the standard normalized area measure of the unit sphere $\mathbb{S}$. Now define for $x>0$, $\log^+(x):=\max\{\log x, 0\}$. Then define the space $\mathbf{h}\log^+\mathbf{h}$ of functions defined in the unit ball so that $$\sup_{0<r<1} \int_{\mathbb{S}}|f(r\zeta)|\log^+|f(r\zeta)|d\sigma(\zeta)<\infty.$$
\subsection{Motivation}
We formulate the following  Zygmund theorem \cite[p.~254]{Zyg}, which is the motivation for our study.
Assume that $f=u+iv$ is a holomorphic function defined in the unit disk $\mathbb{D}$ such that $v(0)=0$. Assume further that $|u|\log^+|u|\in \mathbf{h}^1(\mathbb{D}) $. We write it shortly by $u\in \mathbf{h}\log^+\mathbf{h}$. Then  $f\in \mathbf{h}^1$ and we have the inequality
\begin{equation}\label{aaa}
\|f\|_1\le A(\||u|\log^+|u|\|_1+1),
\end{equation}
where $A$ is an absolute constant. It is well-known that the condition $u\in \mathbf{h}^1(\mathbb{D})$ does not imply that $f\in \mathbf{h}^1(\mathbb{D})$, so \eqref{aaa} is in some sense the best possible inequality \cite[p.~60]{duren}. On the other hand for $p>1$, $u\in \mathbf{h}^p(\mathbb{D})$  imply that $f\in \mathbf{h}^p(\mathbb{D})$ and moreover there is a constant $c_p$ so that $\|f\|_{p}\le c_p\|u\|_{p}$ whenever $\mathrm{Im} f(0)=0$. This is a well-known M. Riesz theorem. For some recent  extensions of M. Riesz theorem we refer to the papers \cite{tams, aimzhu, mel, melmar, chenarxiv, kalajarxiv2, kalajarxiv3, studia}.

The sharpness of constant $A$ in Zygmund theorem as well as its connection with M. Riesz theorem  is discussed by Pichorides in \cite{Pi}.

This paper aims  to extend Zygmund theorem to the class of quasiregular harmonic mappings in the unit disk and in the unit ball.
\subsection{Quasiregular and harmonic mappings}
A continuous and nonconstant mapping $f : G\to \Bbb R^n$, $n\ge
2$, in the local Sobolev space $W^{1,n}_{loc} (G,\Bbb R^n)$ is
$K$-quasiregular, $K\ge 1$, if $$|Df(x)| \le K \ell(f' (x))$$ for almost
every $x\in G$, where $G$ is an open subset of $\mathbb{R}^n$. Here $Df(x)$ is the formal differential matrix and $$|Df(x)|=\sup_{|h|=1}|f'(x)h|,\ \  \ell(Df(x))=\inf_{|h|=1}|f'(x)h|.$$  Here and in the sequel $|\zeta|$ is the Euclidean norm of a vector $\zeta\in \mathbb{R}^n$. Further we denote by  $\left<\zeta,\xi\right>$ the inner product of vectors $\zeta,\xi \in  \mathbb{R}^n$.




A smooth mapping $w:G\to \mathbb{R}^n$ is called harmonic if it satisfies the Laplace equation $\Delta u=0$.
The solution of the equation $\Delta w=g$ (in the sense of
distributions see \cite{Her}) in the  ball $B_R=R\cdot \mathbb{B}$, satisfying the
boundary condition $w|_{S_R}=f\in L^1(S_R)$, where $S_R=R\cdot\mathbb{S}$ is given by
\begin{equation}\label{green}w(x)=\int_{S_R}P(x,\eta)f(\eta)d\sigma(\eta)-\int_{B_R}G(x,y)g(y)dV(y),\,
|x|<1.\end{equation} Here
\begin{equation}\label{poisson}P(x,\eta)=\frac{R^2-|x|^2}{R|x-\eta|^n}\end{equation}
is the Poisson kernel and $d\sigma$ is the  surface $n-1$ dimensional
measure of the Euclidean sphere which satisfies the condition:
$\int_{\mathbb{S}}P(x,\eta)d\sigma(\eta)\equiv 1$. The first integral
in (\ref{green}) is called the Poisson integral and is usually
denoted by $u(x)=P[f](x)$. Then $u$ is a  harmonic mapping. The function
\begin{equation}\label{green1}G(x,y)=\left\{
                                       \begin{array}{ll}
                                         -\frac{1}{2\pi}\log \frac{R|x-y|}{|R^2-x\overline{y}|}, & \hbox{for $n=2$;} \\
                                         c_n\left(\frac{1}{|x-y|^{n-2}}
-\frac{1}{(R^2+|x|^2|y|^2/R^2-2\left<x,y\right>)^{(n-2)/2}}\right), & \hbox{for $n\ge 3$,}
                                       \end{array}
                                     \right.\end{equation}
where
\begin{equation}\label{cen}c_n=\frac{1}{(n-2)\omega_{n-1}}\end{equation} and
$\omega_{n-1}$ is the measure of $\mathbb{S}$, is the Green function of
the  ball $B_R$.

If $n=2$, then we use simultaneously the notation of $k-$quasiregular mappings for $k=(K-1)/(K+1)$.
If $f:\mathbb{D}\to \mathbb{C}$ is harmonic mapping, then there exist two holomorphic functions $g$ and $h$ ($h(0)=0$), in the unit disk, so that $f(z) = g(z) + \overline{h(z)}$. Then $f$ is sense-preserving and $k-$quasiregular, if and only if $|h'(z)|\le k|g'(z)|$, $z\in \mathbb{D}$.
\subsection{Main results}
The following three theorems are main results.

\begin{theorem}\label{th1}
Let $f=g+\bar h$ be a $K-$quasiregular mapping defined in the unit disk $\mathbb{D}$ such that $\mathrm{Im}\, f(0)=0$. Assume further that $u=\mathrm{Re}\, f\in \mathbf{h}\log^+ \mathbf{h}$. Then $f\in \mathbf{h}^1$ and we have the inequality
\begin{equation}\label{zygg1}\int_0^{2\pi}|f(r e^{it})|\frac{dt}{2\pi}\le C_0(K)\left(1+\int_0^{2\pi} |u(re^{it})| \log^+ |u(re^{it})| \frac{dt}{2\pi}\right)\end{equation} for every $r\in (0,1)$ and in particular
\begin{equation}\label{zygg2}\|f\|_1    \le C_0(K)\left(\|u\log^+ u\|_{1}+1\right).\end{equation}
\end{theorem}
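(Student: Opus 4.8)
The plan is to peel off the analytic and anti-analytic parts of $f$, reduce the theorem to a single real harmonic function, and then run a Bellman-type (superharmonic majorant) argument adapted to the quasiregularity.

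First I would set $\phi=g+h$ and $\psi=g-h$, both holomorphic, so that $g=\tfrac12(\phi+\psi)$ and $h=\tfrac12(\phi-\psi)$. Substituting into $f=g+\bar h$ and using $\phi+\bar\phi=2\,\mathrm{Re}\,\phi$ and $\psi-\bar\psi=2i\,\mathrm{Im}\,\psi$ gives the clean identity $f=\mathrm{Re}\,\phi+i\,\mathrm{Im}\,\psi=u+iv$, where $u=\mathrm{Re}\,f$ and $v:=\mathrm{Im}\,\psi$. Hence $|f|=\sqrt{u^2+v^2}\le|u|+|v|$. Since $|u|\le e+|u|\log^+|u|$ pointwise, we get $\tfrac1{2\pi}\int_0^{2\pi}|u(re^{it})|\,dt\le e+\tfrac1{2\pi}\int_0^{2\pi}|u|\log^+|u|\,dt$ for every $r$, so the whole theorem reduces to the single estimate $\tfrac1{2\pi}\int_0^{2\pi}|v(re^{it})|\,dt\le C(K)\big(1+\tfrac1{2\pi}\int_0^{2\pi}|u|\log^+|u|\,dt\big)$.

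Next I would record the precise way in which quasiregularity couples $u$ and $v$. Writing $\omega=h'/g'$ for the analytic dilatation, the hypothesis gives $|\omega|\le k=(K-1)/(K+1)$, and a short computation yields $v_z=\tfrac1i\,\mu\,u_z$ with $\mu=\tfrac{1-\omega}{1+\omega}$; equivalently $\nabla v=M\nabla u$ for a conformal (rotation-scaling) matrix $M$, whence $|\nabla v|=|\mu|\,|\nabla u|$ and $\nabla u\cdot\nabla v=(\mathrm{Im}\,\mu)\,|\nabla u|^2$. As $\omega$ runs over $\{|\omega|\le k\}$, the multiplier $\mu$ runs over the disk $D=D\big(\tfrac{K+K^{-1}}2,\tfrac{K-K^{-1}}2\big)$, so $\tfrac1K\le|\mu|\le K$ and $\mathrm{Re}\,\mu>0$. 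Because $u,v$ are harmonic, for any $\Phi\in C^2(\mathbb{R}^2)$ this produces the identity $\Delta(\Phi\circ f)=\big(\Phi_{uu}+2(\mathrm{Im}\,\mu)\Phi_{uv}+|\mu|^2\Phi_{vv}\big)|\nabla u|^2$, exhibiting $\Phi\circ f$ as governed by a whole family of elliptic operators $L_\mu$, $\mu\in D$.

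The heart of the argument is then to construct one function $\Phi=\Phi(u,v)$ that is simultaneously $L_\mu$-superharmonic for every $\mu\in D$, i.e. $\Phi_{uu}+2(\mathrm{Im}\,\mu)\Phi_{uv}+|\mu|^2\Phi_{vv}\le0$ on $D$, and is sandwiched as $|v|-C_1|u|\log^+|u|-C_0\le\Phi(u,v)\le C_2+C_3|u|\log^+|u|$. Granting such a $\Phi$, integrating the lower bound $|v|\le\Phi(u,v)+C_1|u|\log^+|u|+C_0$ and using the super-mean-value inequality $\tfrac1{2\pi}\int_0^{2\pi}\Phi(f(re^{it}))\,dt\le\Phi(f(0))$ reduces matters to bounding $\Phi(f(0))$; since $f(0)=g(0)$ and $t\mapsto t\log^+t$ is convex and increasing, $|u|\log^+|u|$ is subharmonic, so the upper sandwich bound at $f(0)$ gives $\Phi(f(0))\le C_2+C_3\,\tfrac1{2\pi}\int_0^{2\pi}|u|\log^+|u|\,dt$. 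Combining these yields the reduced estimate for $v$, and hence \eqref{zygg1}, with $C(K)$ assembled from $C_0,\dots,C_3$.

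I expect the construction and verification of $\Phi$ to be the main obstacle. The differential inequality must hold uniformly over the $\mu$-disk $D$ (equivalently over all admissible dilatations), which after optimizing in $\mu$ becomes a genuinely two-dimensional, $K$-dependent constraint; at the same time $\Phi$ must match the borderline $L\log L$ growth in $u$ while dominating $|v|$, so it cannot be homogeneous and is forced to carry logarithmic terms. For $K=1$ the operator degenerates to $\Phi_{uu}+\Phi_{vv}\le0$ and $\Phi$ should reduce to the classical Zygmund--Pichorides extremal majorant; the task is to deform that function so as to preserve superharmonicity for all the tilted operators $L_\mu$ and to track the resulting constant $C(K)$, in particular its growth as $K\to\infty$.
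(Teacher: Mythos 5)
Your reduction steps are sound: the decomposition $f=\mathrm{Re}\,\phi+i\,\mathrm{Im}\,\psi$ with $\phi=g+h$, $\psi=g-h$, the multiplier identity $v_z=-i\mu u_z$ with $\mu=(1-\omega)/(1+\omega)$ ranging over the disk $D\big(\tfrac{K+K^{-1}}{2},\tfrac{K-K^{-1}}{2}\big)$, and the formula $\Delta(\Phi\circ f)=\big(\Phi_{uu}+2(\mathrm{Im}\,\mu)\Phi_{uv}+|\mu|^2\Phi_{vv}\big)|\nabla u|^2$ are all correct. But the proposal is not a proof: its entire weight rests on the existence of the majorant $\Phi$, which you never construct and explicitly defer as ``the main obstacle.'' Reducing the theorem to an unproven lemma is the gap. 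Moreover, the sandwich you require of $\Phi$ is self-contradictory as stated: if both $|v|-C_1|u|\log^+|u|-C_0\le\Phi(u,v)$ and $\Phi(u,v)\le C_2+C_3|u|\log^+|u|$ held for all $(u,v)\in\mathbb{R}^2$, then $|v|\le C_0+C_2+(C_1+C_3)|u|\log^+|u|$ everywhere, which fails for fixed $u$ and $|v|\to\infty$. The upper bound can only be imposed on the line $\{v=0\}$, and to use it at $f(0)$ you must first normalize $v(0)=0$ (harmless for $u$ and for quasiregularity, but it has to be said); as written, ``the upper sandwich bound at $f(0)$'' is invoked for a point that need not lie on that line. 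Even with these repairs, producing a single $\Phi$ that is $L_\mu$-superharmonic uniformly over the whole multiplier disk $D$, with borderline $L\log L$ growth and controlled $K$-dependence, is exactly the hard content of the theorem, and nothing in the proposal delivers it.

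For comparison, the paper sidesteps any Bellman function. It applies the classical Zygmund theorem to the single holomorphic function $\phi=g+h$ (whose real part is $u$), and then transfers the $H^1$ bound from $\phi$ to $g$ and $h$ separately via Pavlovi\'c's Calder\'on-type square-function theorem, $\|H\|_{H^1}\asymp\|G[H]\|_{L^1}$ with $G[H](z)=\big(\int_0^1|H'(\rho z)|^2(1-\rho)\,d\rho\big)^{1/2}$: quasiregularity gives the pointwise bounds $|g'|\le\frac{1}{1-k}|g'+h'|$ and $|h'|\le\frac{k}{1-k}|g'+h'|$, which pass directly to the square functions and hence to the $H^1$ norms. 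Your Laplacian-comparison framework is essentially the strategy the paper reserves for Theorem~\ref{zyg}, where the extra hypothesis $u>0$ supplies a ready-made majorant through the inequality $\Delta|f|\le K^2\Delta(u\log u)$; for $u$ of arbitrary sign, which is the actual content of Theorem~\ref{th1}, no such explicit majorant is exhibited, and that is precisely where your plan stops short of a proof.
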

For a quasiregular mapping with a positive real part, we have the following asymptotically sharp result.
\begin{theorem}\label{zyg}
Assume that $f=u+ iv$ is a harmonic $K-$quasiregular function defined in the unit disk $\mathbb{D}$ so that $v(0)=0$ and assume further that $u(z)>0$ for $z\in\mathbb{D}$. If $u\in \mathbf{h}\log^+ \mathbf{h}$, then $f\in \mathbf{h}^1$ and we have the inequality

 \begin{equation}\label{zygmund}\int_0^{2\pi}|f(r e^{it})|\frac{dt}{2\pi}\le K^2 \left(e^{-1+K^{-2}}+\int_0^{2\pi} u(re^{it}) \log u(re^{it}) \frac{dt}{2\pi}\right)\end{equation} for every $r\in (0,1)$ and in particular
\begin{equation}\label{zygmund1}\|f\|_1\le K^2 \left(e^{-1+K^{-2}}+\int_{\mathbb{T}} u \log u \frac{|dz|}{2\pi}\right)\end{equation}
and  $$\|v\|_1\le \|f\|_1  \le K^2\left(\|u\log^+ u\|_{1}+1\right).$$  The constants are asymptotically sharp when $K\to 1$.
\end{theorem}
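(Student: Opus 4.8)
The plan is to reduce the entire statement to a single superharmonicity assertion. Writing $f=u+iv$ and $\rho=|f|=\sqrt{u^{2}+v^{2}}$, I introduce on the right half plane $\{u>0\}$ (which is exactly where $f$ takes its values, by hypothesis) the function
\[
\Lambda(u,v)=\sqrt{u^{2}+v^{2}}-K^{2}u\log u .
\]
The whole theorem will follow once I show that $\Lambda\circ f$ is superharmonic on $\mathbb{D}$. Indeed, superharmonicity gives the super–mean–value property, so $r\mapsto\frac1{2\pi}\int_{0}^{2\pi}\Lambda(f(re^{it}))\,dt$ is nonincreasing and bounded above by $\Lambda(f(0))$. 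Since $v(0)=0$ forces $f(0)=u(0)$, one has $\Lambda(f(0))=u(0)-K^{2}u(0)\log u(0)=-\psi(u(0))$ with $\psi(t)=K^{2}t\log t-t$; the elementary computation $\psi'(t)=K^{2}\log t+K^{2}-1$ gives $\min_{t>0}\psi(t)=\psi\!\left(e^{K^{-2}-1}\right)=-K^{2}e^{-1+K^{-2}}$, hence $\Lambda(f(0))\le K^{2}e^{-1+K^{-2}}$. Rearranging the mean inequality $\frac1{2\pi}\int_{0}^{2\pi}\Lambda(f(re^{it}))\,dt\le\Lambda(f(0))$ then produces \eqref{zygmund} verbatim, and explains the appearance of the constant $e^{-1+K^{-2}}$.

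To obtain the superharmonicity I use the standard criterion for compositions with harmonic maps. Since each component of $f$ is harmonic, the first–order terms drop out and for any $C^{2}$ function $P$ one has $\Delta(P\circ f)=\mathrm{tr}\!\left(\mathrm{Hess}\,P(f)\cdot G\right)$, where $G=Df\,(Df)^{T}$ is positive semidefinite with eigenvalues equal to the squared singular values $|Df|^{2}\ge\ell(Df)^{2}$ of $Df$. The definition of $K$-quasiregularity, $|Df|\le K\,\ell(Df)$, means precisely that the eigenvalue ratio of $G$ is at most $K^{2}$. Requiring $\mathrm{tr}(AG)\le0$ for $A=\mathrm{Hess}\,\Lambda$ and for every such $G$ is a linear program over the cone $\{g_{1}\le K^{2}g_{2},\,g_{2}\le K^{2}g_{1}\}$ whose extreme rays are $(K^{2},1)$ and $(1,K^{2})$; optimizing shows the condition is equivalent to the eigenvalue inequality $K^{2}\mu_{\max}(A)+\mu_{\min}(A)\le0$ at every point of $\{u>0\}$.

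The heart of the proof is verifying this Hessian inequality, and this is the step I expect to be the main obstacle. A direct computation gives
\[
\mathrm{Hess}\,\Lambda=\frac{1}{\rho^{3}}\begin{pmatrix}v^{2}&-uv\\-uv&u^{2}\end{pmatrix}-\begin{pmatrix}K^{2}/u&0\\0&0\end{pmatrix},
\]
with trace $T=\tfrac1\rho-\tfrac{K^{2}}{u}$ and determinant $D=-\,K^{2}u/\rho^{3}<0$, so the Hessian is indefinite ($\mu_{\max}>0>\mu_{\min}$). Because $\rho\ge u$ and $K\ge1$ we have $T\le0$; thus, writing $\mu_{\max},\mu_{\min}$ through $T$ and $D$, the inequality $K^{2}\mu_{\max}+\mu_{\min}\le0$ becomes $(K^{2}-1)\sqrt{T^{2}-4D}\le-(K^{2}+1)T$, and since both sides are nonnegative it may be squared to the clean polynomial inequality $(K^{2}-1)^{2}u^{3}\le\rho\,(K^{2}\rho-u)^{2}$. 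Putting $s=\rho/u\ge1$ this is exactly $(K^{2}-1)^{2}\le s\,(K^{2}s-1)^{2}$, which I verify for $s\ge1$ by noting $\tfrac{d}{ds}\big[s(K^{2}s-1)^{2}\big]=(K^{2}s-1)(3K^{2}s-1)\ge0$; equality holds precisely at $s=1$, i.e.\ at $v=0$, which is what drives the sharpness.

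It remains to pass to the boundary and collect the consequences. Since $u>0$ one has $u\log u\le u\log^{+}u$ and $u\log u\ge-1/e$, so the hypothesis $u\in\mathbf{h}\log^{+}\mathbf{h}$ bounds the right–hand side of \eqref{zygmund} uniformly in $r$; letting $r\to1$ gives $f\in\mathbf{h}^{1}$ together with \eqref{zygmund1}. Finally $|v|\le|f|$ yields $\|v\|_{1}\le\|f\|_{1}$, and using $e^{-1+K^{-2}}\le1$ for $K\ge1$ together with $u\log u\le u\log^{+}u$ turns \eqref{zygmund1} into $\|f\|_{1}\le K^{2}\big(\|u\log^{+}u\|_{1}+1\big)$. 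For asymptotic sharpness as $K\to1$ the constants $K^{2}$ and $K^{2}e^{-1+K^{-2}}$ both tend to the sharp constants of the classical (analytic) Zygmund inequality, and the equality case $s=1$ (that is $v\equiv0$) of the decisive inequality provides the near–extremal configurations.
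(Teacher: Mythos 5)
Your proof is correct, and at the structural level it coincides with the paper's: the decisive fact in both arguments is the pointwise inequality $\Delta|f|\le K^{2}\Delta(u\log u)$ on $\{u>0\}$ --- your superharmonicity of $\Lambda\circ f$, with $\Lambda(u,v)=\sqrt{u^{2}+v^{2}}-K^{2}u\log u$, is literally this statement, which is the paper's Lemma~\ref{larmi} --- and both proofs then integrate it (you via the super-mean-value property, the paper via the explicit Green--Riesz representation of $|f(0)|$; these are equivalent devices) and finish with the identical optimization of $\Phi(\xi)=\xi-K^{2}\xi\log\xi$ at $\xi=u(0)$, which produces the constant $K^{2}e^{-1+K^{-2}}$. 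Where you genuinely diverge is in how the key inequality is established. The paper writes $f=g+\bar h$ and computes in complex notation $\Delta|f|=\bigl|g'-\tfrac{f^{2}}{|f|^{2}}h'\bigr|^{2}/|f|$ and $\Delta(u\log u)=|g'+h'|^{2}/u$, then bounds the ratio by $\frac{(|g'|+|h'|)^{2}}{(|g'|-|h'|)^{2}}\cdot\frac{u}{|f|}\le K^{2}$. You instead run a real-variable Hessian argument: $\Delta(\Lambda\circ f)=\mathrm{tr}\bigl(\mathrm{Hess}\,\Lambda(f)\,G\bigr)$ with $G=Df(Df)^{T}$, quasiregularity caps the eigenvalue ratio of $G$ at $K^{2}$, and the desired sign reduces (via $K^{2}\mu_{\max}+\mu_{\min}\le 0$ for the indefinite Hessian) to the scalar inequality $(K^{2}-1)^{2}\le s(K^{2}s-1)^{2}$ for $s=|f|/u\ge 1$, verified by monotonicity. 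This is a valid, self-contained alternative; in fact it is the two-dimensional incarnation of the method the paper itself uses for the $n$-dimensional Theorem~\ref{zyg44} (via $\Delta|f|=|f|\,\|DS\|^{2}$ and directional derivatives), it avoids the holomorphic decomposition entirely, and it isolates the equality case $s=1$ (i.e.\ $v=0$), which the complex computation obscures. The only soft spots are minor and shared with the paper: the boundary integral in the second displayed inequality of the theorem must be read as a supremum/limit over $r$ (you address this), and the claim of asymptotic sharpness as $K\to 1$ is asserted rather than proved both in your write-up and in the paper's own proof of this theorem, where it is deferred to the corollary's example and to the $n$-dimensional computation.
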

A straightforward consequence of Theorem~\ref{zyg} is the following corollary which is interesting in its own right.
\begin{corollary}
If $f=u+i v$ is an analytic function of the unit disk into the vertical strip $(0,1)\times \mathbb{R}$ so that $v(0)=0$, then $$\|f\|_{H^1}<1.$$ The constant $1$ is sharp as can be seen by the following example $f(z) = \frac{n}{(n+1)},$ when $n\to \infty$. Indeed, we can use any admissible function of $f$, with $f(0)=n/(n+1)$.
\end{corollary}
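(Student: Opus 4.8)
The plan is to read the Corollary as the borderline case $K=1$ of Theorem~\ref{zyg}, since every holomorphic map is conformal, hence a harmonic $1$-quasiregular mapping. Write $f=u+iv$. The hypothesis $f(\mathbb{D})\subset(0,1)\times\mathbb{R}$ gives $0<u(z)<1$ throughout $\mathbb{D}$; in particular $u>0$, and since $u$ is bounded we have $u\log^+u\equiv0$, so $u\in\mathbf{h}\log^+\mathbf{h}$ trivially, while $v(0)=0$ is assumed. If $f$ is constant, say $f\equiv c$, then $v(0)=0$ forces $c=u(0)\in(0,1)$ and $\|f\|_{H^1}=u(0)<1$ directly, so I may assume $f$ nonconstant. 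Applying Theorem~\ref{zyg} with $K=1$, where $K^2=1$ and $e^{-1+K^{-2}}=e^0=1$, gives $f\in\mathbf{h}^1$ together with, for every $r\in(0,1)$,
\[
\int_0^{2\pi}|f(re^{it})|\,\frac{dt}{2\pi}\le 1+\int_0^{2\pi}u(re^{it})\log u(re^{it})\,\frac{dt}{2\pi}.
\]

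The second step is a sign analysis of the right side. For $re^{it}\in\mathbb{D}$ one has $u(re^{it})\in(0,1)$, and $s\mapsto s\log s$ is strictly negative on $(0,1)$, so for each fixed $r<1$ the integrand is continuous and strictly negative and hence $M_1(r,f)<1$. Since $r\mapsto M_1(r,f)$ is nondecreasing, $\|f\|_{H^1}=\sup_{r<1}M_1(r,f)$; letting $r\to1$ gives $\|f\|_{H^1}\le 1+I$, where $I=\int_{\mathbb{T}}u^{\ast}\log u^{\ast}\,\frac{|dz|}{2\pi}\le 0$ because the boundary trace satisfies $u^{\ast}\in[0,1]$. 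This already yields the nonstrict bound $\|f\|_{H^1}\le 1$.

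The main obstacle is upgrading this to the strict inequality. When $I<0$ one concludes at once, since then $M_1(r,f)\le 1+I/2<1$ for $r$ near $1$. The delicate case is $I=0$, which forces $u^{\ast}\in\{0,1\}$ almost everywhere; this genuinely occurs, for instance for the conformal map of $\mathbb{D}$ onto the strip, whose boundary values have real part in $\{0,1\}$ a.e., and there Theorem~\ref{zyg} only yields $\le 1$. Strictness must then be extracted separately, either by exploiting the slack in the proof of Theorem~\ref{zyg} at $K=1$, or by showing that the value $1=\sup\|f\|_{H^1}$ over the admissible family is not attained via a normal families argument on a maximizing sequence. Finally, the sharpness of the constant $1$ is the short computation for $f_n(z)=\frac{n}{n+1}+\frac{z}{n+1}$: each $f_n$ maps $\mathbb{D}$ into $(0,1)\times\mathbb{R}$ with $v_n(0)=0$, while
\[
\|f_n\|_{H^1}=\frac{1}{2\pi(n+1)}\int_0^{2\pi}\sqrt{n^2+2n\cos t+1}\,dt\longrightarrow 1 \quad (n\to\infty),
\]
so no smaller constant is admissible.
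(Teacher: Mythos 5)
Your reduction to Theorem~\ref{zyg} with $K=1$ and your sharpness computation are fine, but the proof is not complete, and you say so yourself: in the case $I=\int_{\mathbb{T}}u^{*}\log u^{*}\,\frac{|dz|}{2\pi}=0$ (which genuinely occurs, e.g.\ for the conformal map of $\mathbb{D}$ onto the strip) your argument only yields $\|f\|_{H^1}\le 1$, and you leave the strict inequality ``to be extracted separately,'' naming two strategies without carrying either one out. That is a genuine gap, because the strict bound is the whole content of the corollary; note also that strictness cannot come from any uniform improvement of the constant (the example in the statement rules that out), so it must come from a datum of the individual map $f$, and the only such datum available is $u(0)$.

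The missing step is precisely the first strategy you mention, and it is one line once you look inside the proof of Theorem~\ref{zyg}. That proof does not pass directly to the constant $K^{2}e^{-1+K^{-2}}$; it first establishes the intermediate inequality
\[
\int_{\mathbb{T}}|f(rz)|\,\frac{|dz|}{2\pi}\le\Phi(u(0))+K^{2}\frac{1}{2\pi}\int_{\mathbb{T}}u(rz)\log u(rz)\,|dz|,
\qquad \Phi(\xi)=\xi-K^{2}\xi\log\xi,
\]
and only afterwards replaces $\Phi(u(0))$ by its maximum $K^{2}e^{-1+K^{-2}}$. For the corollary you must \emph{not} make that last replacement: with $K=1$, since $0<u<1$ the integral term is negative for every $r$, so $M_1(r,f)\le\Phi(u(0))$ uniformly in $r$; and since $\Phi'(\xi)=-\log\xi>0$ on $(0,1)$ and $\Phi(1)=1$, the hypothesis $u(0)\in(0,1)$ gives
\[
\|f\|_{H^1}\le u(0)\bigl(1-\log u(0)\bigr)<1,
\]
which settles the critical case $I=0$ (and in fact gives a quantitative bound depending only on $u(0)$). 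This is exactly how the corollary follows ``straightforwardly'' from Theorem~\ref{zyg} in the paper. Your alternative suggestion via normal families is not a ready substitute: a maximizing sequence such as $f_n=\frac{n}{n+1}+\frac{z}{n+1}$ converges locally uniformly to the constant $1$, which leaves the admissible family, so non-attainment of the supremum does not follow from compactness considerations alone.
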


For higher-dimensional settings we prove
\begin{theorem}\label{zyg44}
Assume that $f=(f_1,\dots, f_n):\mathbb{B}\to \mathbb{R}^n$ is a harmonic $K-$quasiregular function defined in the unit ball $\mathbb{B}$ so that $u(x)=f_1(x)>0$ for $z\in\mathbb{B}$. If $u\in \mathbf{h}\log^+ \mathbf{h}$, then $f\in \mathbf{h}^1$ and we have the inequality

\begin{equation}\label{zygmund99}\|f\|_1-|f(0)|\le K^2(n-1)\left(\int_{\mathbb{S}}\left(u(\zeta)\log u(\zeta)-u(0)\log u (0)\right) d\sigma(\zeta)\right)\end{equation}
and if $|f(0)|=u(0)$, then  \begin{equation}\label{zyg99}\|f\|_1\le K^2(n-1)( e^{-1+K^{-2}/(n-1)}+\|u\log^+u \|_1).\end{equation}
The constant $K^2(n-1)$ in \eqref{zygmund99} is asymptotically sharp as $K\to 1$.
\end{theorem}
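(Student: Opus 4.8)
The plan is to deduce \eqref{zygmund99} from the super-mean-value property of a suitable auxiliary function. Set $u=f_1$ and
\[
\Phi:=|f|-(n-1)K^2\,u\log u .
\]
Since $u=f_1>0$ throughout $\mathbb{B}$, the map $f$ never vanishes, so $|f|=\sqrt{f_1^2+\cdots+f_n^2}$ is $C^\infty$; likewise $u\log u$ is smooth because $u>0$. Hence $\Phi\in C^2(\mathbb{B})$ and the whole argument is classical, with no exceptional set to handle. The heart of the proof is the pointwise Laplacian comparison
\[
\Delta|f|\le (n-1)K^2\,\Delta(u\log u)\qquad\text{on }\mathbb{B},
\]
that is $\Delta\Phi\le0$, so that $\Phi$ is superharmonic.

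For the right-hand side, harmonicity of $u$ and the chain rule $\Delta F(u)=F''(u)|\nabla u|^2$ give $\Delta(u\log u)=|\nabla u|^2/u$. For the left-hand side I would write $\rho=|f|$ and combine $\Delta\rho^2=2\sum_{i}|\nabla f_i|^2$ (each $f_i$ being harmonic) with $\Delta\rho^2=2\rho\,\Delta\rho+2|\nabla\rho|^2$ to obtain
\[
\Delta|f|=\frac{\sum_{i=1}^n|\nabla f_i|^2-|\nabla|f||^2}{|f|}.
\]
Now let $\sigma_1\ge\cdots\ge\sigma_n$ be the singular values of $Df$, so that $\sum_i|\nabla f_i|^2=\sum_i\sigma_i^2$, $|Df|=\sigma_1$, $\ell(Df)=\sigma_n$, and $K$-quasiregularity reads $\sigma_1\le K\sigma_n$. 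The crux is a singular-value estimate. From $\nabla|f|=|f|^{-1}(Df)^{\mathsf T}f$ and $\nabla u=(Df)^{\mathsf T}e_1$, together with $\ell((Df)^{\mathsf T})=\ell(Df)=\sigma_n$, one gets $|\nabla|f||\ge\sigma_n$ and $|\nabla u|\ge\sigma_n$. Therefore
\[
\sum_{i=1}^n|\nabla f_i|^2-|\nabla|f||^2\le\sum_{i=1}^{n-1}\sigma_i^2\le(n-1)\sigma_1^2\le(n-1)K^2\sigma_n^2\le(n-1)K^2|\nabla u|^2 .
\]
Dividing by $|f|$ and using $|f|\ge u>0$ (the numerator being nonnegative) yields $\Delta|f|\le(n-1)K^2|\nabla u|^2/u=(n-1)K^2\Delta(u\log u)$, as desired.

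With $\Phi$ superharmonic, the spherical means $r\mapsto\int_{\mathbb{S}}\Phi(r\zeta)\,d\sigma(\zeta)$ are non-increasing, so $\int_{\mathbb{S}}\Phi(r\zeta)\,d\sigma\le\Phi(0)$; rearranging gives \eqref{zygmund99} at radius $r$. Both $|f|$ and $u\log u$ are subharmonic (their Laplacians are nonnegative), so the two spherical means are non-decreasing in $r$; since $u\in\mathbf{h}\log^+\mathbf{h}$ forces $\int_{\mathbb{S}}u(r\zeta)\log u(r\zeta)\,d\sigma$ to stay below $\|u\log^+u\|_1$ (and $s\log s\ge-1/e$ bounds it below), letting $r\to1$ produces \eqref{zygmund99} and simultaneously $\sup_r M_1(r,f)<\infty$, i.e.\ $f\in\mathbf{h}^1$. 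When $|f(0)|=u(0)$, \eqref{zygmund99} gives $\|f\|_1\le\bigl(u(0)-(n-1)K^2u(0)\log u(0)\bigr)+(n-1)K^2\int_{\mathbb{S}}u\log u\,d\sigma$; maximizing $a\mapsto a-(n-1)K^2a\log a$ over $a>0$ (optimum at $a=e^{-1+K^{-2}/(n-1)}$, with value $(n-1)K^2e^{-1+K^{-2}/(n-1)}$) and bounding $\int u\log u\le\|u\log^+u\|_1$ gives \eqref{zyg99}.

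Finally, for the asymptotic sharpness of $(n-1)K^2$ as $K\to1$, I would track the equality cases above: equality propagates when $\sigma_1=\cdots=\sigma_{n-1}=K\sigma_n$ and $|f|\equiv u$, i.e.\ for maps that are nearly conformal with one distinguished contracting direction. The plan is to exhibit a one-parameter family of harmonic $K$-quasiregular mappings with $f_1>0$ built to saturate these relations, degenerating to a conformal map as $K\to1$ (in the spirit of the extremal used in Theorem~\ref{zyg} for $n=2$, where $(n-1)K^2=K^2$), and to verify that the ratio of the two sides of \eqref{zygmund99} tends to $n-1$. Constructing this extremal family and checking the limit is the step I expect to be the main obstacle; the Laplacian comparison itself is robust precisely because the hypothesis $f_1>0$ removes all regularity issues.
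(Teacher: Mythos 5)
Your derivation of inequalities \eqref{zygmund99} and \eqref{zyg99} is correct and follows essentially the paper's route: the heart of both is the pointwise comparison $\Delta|f|\le (n-1)K^2\,\Delta(u\log u)$, followed by the Green/mean-value representation and, for \eqref{zyg99}, maximization of $a\mapsto a-(n-1)K^2a\log a$. Your singular-value chain
$\sum_i|\nabla f_i|^2-|\nabla|f||^2\le\sum_{i=1}^{n-1}\sigma_i^2\le(n-1)K^2\sigma_n^2\le(n-1)K^2|\nabla u|^2$, followed by division by $|f|\ge u$, is in fact a cleaner and more careful version of the paper's argument, which works with $S=f/|f|$, the identity $\Delta|f|=|f|\,\|DS\|^2$, and an orthonormal frame adapted to $f(x)$; your version avoids the paper's delicate (and somewhat loosely written) bookkeeping about which directional derivative of $S$ vanishes. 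Packaging the two Laplacians into one superharmonic function $\Phi$ instead of applying the representation formula to each side separately is only a cosmetic difference, and your passage to $r\to 1$ and the membership $f\in\mathbf{h}^1$ are handled at least as carefully as in the paper.

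The genuine gap is the sharpness claim: the theorem asserts that $K^2(n-1)$ in \eqref{zygmund99} is asymptotically sharp as $K\to 1$, and this you do not prove --- you only describe equality heuristics and explicitly defer the construction of an extremal family, which is exactly the part requiring actual work. The paper settles it with the explicit family $f(x)=ax+(1,0,\dots,0)$, $0<a<1$, so that $Df=aI$ (hence $K=1$) and $u=f_1=1+ax_1>0$. Writing $X(f)=\|f\|_1-|f(0)|$ and $Y(f)=\int_{\mathbb{S}}\left(u\log u-u(0)\log u(0)\right)d\sigma$, one expands $\sqrt{1+a^2+2a\cos t}-1$ and $(1+a\cos t)\log(1+a\cos t)$ to second order in $a$ and computes $X(f)=\frac{a^2(n-1)}{2n}+o(a^2)$ and $Y(f)=\frac{a^2}{2n}+o(a^2)$, whence $X(f)/Y(f)\to n-1$ as $a\to 0$; since the constant in \eqref{zygmund99} tends to $n-1$ as $K\to 1$, sharpness follows. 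Your heuristic does point at the right extremals --- near-conformal maps with $|f|$ close to $u$; indeed for this family $\Delta|f|/\Delta(u\log u)=(n-1)u/|f|\to n-1$ uniformly as $a\to 0$ --- but without the construction and the limit computation the sharpness part of the theorem remains unproven.
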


\section{Proof of main results}

\begin{proof}[Proof of Theorem~\ref{th1}]

We lose no generality in assuming that $r=1$. Assume first that $\mathrm{Re}(f(0))=0$. Since $f(0)=0,$ we can assume that $h(0)=g(0)=0$.
 Note first that if $H$ is an analytic function defined in the unit disk with $H(0)=0$, then we have the  inequalities \begin{equation}\label{monday}\|H\|_p\le c_1(p)\|G[H]\|_p\text{\ \ and \ \ }\|G[H]\|_p\le c_2(p)\|H\|_p,\end{equation} where \begin{equation}\label{cand}G[H](z)=\left(\int_{0}^1|H'(\rho z)|^2(1-\rho)d\rho\right)^{1/2}.\end{equation} This hold due to Calderon type theorem of Pavlovi\' c \cite[Theorem~10.6]{pavlovic}. We apply this inequality for $p=1$ and $H\in\{g,h,g+h\}$. Let $u = \mathrm{Re}\,(g+h)$, then $\tilde u=\mathrm{Im}\,(g+h)\in \mathbf{h}^1(\mathbb{D})$ and from the classical Zygmund theorem \cite[p.~57]{duren}, we have $$\|\tilde u\|_{H^1}\le A\||u|\log^+|u|\|_{L^1}+B,$$ where $$ A=1 , \ \  B=6\pi e .$$ 
 Since $|u|\le 1+|u|\log^+|u|$, we get the following $$\|g+h\|_{H^1}\le \|u\|_1+\|\tilde u\|_1\le  2\||u|\log^+|u|\|_{L^1}+6\pi e+1. $$
Since $f$ is quasiregular we have \begin{equation}\label{quas}
|g'(z)|\le \frac{|g'(z)+h'(z)|}{1-k}, \ \ \ |h'(z)|\le \frac{k|g'(z)+h'(z)|}{1-k},
\end{equation}
where $k=(K-1)/(K+1)$.
 Denote the constants in \eqref{monday} by $c_1=c_1(1)$ and $c_2=c_2(1)$. Then by \eqref{monday}  and \eqref{quas}, we have $$\|g\|_{H^1}\le c_1 \|G(g)\|_1\le  \frac{c_1}{1-k}\|G(g+h)\|_1\le  \frac{c_1\cdot c_2}{1-k}\|g+h\|_{H^1}$$  and

 $$\|h\|_{H^1}\le c_1 \|G(h)\|_1\le  \frac{c_1 k}{1-k}\|G(g+h)\|_1\le  \frac{c_1\cdot c_2 k }{1-k}\|g+h\|_{H^1}.$$
 Thus $$\|f\|_1\le  (\|g\|_{H^1}+\|h\|_{H^1})\le CK(2\||u|\log^+|u|\|_{L^1}+6\pi e+1), $$ where  $C=c_1\cdot c_2$.

  The last inequality follows from the fact that $|f|$ is a subharmonic function.

 Thus for $C(K)=2(6\pi e+1) c_1 c_2 K$ we proved \eqref{zygg1} and \eqref{zygg2} for the case $u(0)=0$.

 Assume now that $u(0)\neq 0$. Let $F(z) = f(z) - u(0)$. Then by applying the previous case we have \begin{equation}\label{g11}\int_0^{2\pi}|f(r e^{it})-u(0)|\frac{dt}{2\pi}\le C(K)\left(1+\int_0^{2\pi} |u(re^{it})-u(0)| \log^+ |u(re^{it})-u(0)| \frac{dt}{2\pi}\right).\end{equation}

 Now we  use the following convex function $k(t) = t\log^{+}(t)$ for $t>0$, in order to conclude that $g(z)=k(|u(z)|)=|u(z)|\log^{+}(|u(z)|)$ is subharmonic. Then we get the inequality \begin{equation}\label{g00} g(0)=|u(0)|\log^{+}(|u(0)|)\le \frac{1}{2\pi}\int_{\mathbb{T}} |u(z)|\log^{+}(|u(z)|) |dz|.\end{equation} Further, if $|u(0)|>1$, then
 $$ g(0)=|u(0)|\log _{+}(|u(0)|)=|u(0)|\log (|u(0)|)\le  \frac{1}{2\pi}\int_{\mathbb{T}} |u(z)|\log^{+}(|u(z)|) |dz|,$$ and therefore $$|u(0)|\le e+ \frac{1}{2\pi}\int_{\mathbb{T}} |u(z)|\log^{+}(|u(z)|) |dz|.$$

 Then from \eqref{g00}, \eqref{g11} and Lemma~\ref{froml} below we have
 \[\begin{split}
 \|f\|_{1}&\le |u(0)|+\|f-u(0)\|_{1}
  \\ &\le e+ \frac{1}{2\pi}\int_{\mathbb{T}} |u(z)|\log^{+}(|u(z)|) |dz|
  \\& +C(K)\left(1+\int_0^{2\pi} |u(re^{it})-u(0)| \log^+ |u(re^{it})-u(0)| \frac{dt}{2\pi}\right)
  \\ &\le   e+ \frac{1}{2\pi}\int_{\mathbb{T}} |u(z)|\log^{+}(|u(z)|) |dz|
 \\&+C(K)\left(1+4/e + 2|u(0)| \log^+ |u(0)|+  \frac{2}{2\pi}\int_{\mathbb{T}} |u(z)|\log^{+}(|u(z)|) |dz|\right)
 \\&\le (1+4 C(K))\left(1+\frac{1}{2\pi}\int_{\mathbb{T}} |u(z)|\log^{+}(|u(z)|) |dz|\right).\end{split}\] In the last inequality, we applied the bound
\[
\left(c+\frac{4 c}{e}+e\right)+(1+4 c)X <(1+4c)(1+X),
\]
which holds trivially under the assumption that \( c=C(K) > 2 \).  This completes the proof of Theorem~\ref{th1}, modulo the following lemma.
\end{proof}
 \begin{lemma}\label{froml}
 For every real $a$ and $x$ we have \[
f(a,x):=|a+x| \max(0,\log |a+x|)
- 2 \left(  |a| \max(0,\log |a|) + |x| \max(0,\log |x|) \right) \leq 4/e.
\]
  \end{lemma}

\begin{proof}
Assume without loosing the generality that $a+x\ge 0$ and $x\ge 0$. Then  \[
f(a, x) =
\begin{cases}
(a + x)  \log(a + x) - 2  \left( a  \log(a) + x  \log(x)\right), & \text{if } a > 0, \, a + x > 1, \, x > 1, \\
(a + x)  \log(a + x) - 2  \left( a  \log(a)\right), & \text{if } a > 0, \, a + x > 1, \, x \leq 1, \\
-2  \left( a  \log(a) + x  \log(x)\right), & \text{if } a > 0, \, a + x \leq 1, \, x > 1, \\
-2  \left( a  \log(a)\right), & \text{if } a > 0, \, a + x \leq 1, \, x \leq 1, \\
(a + x)  \log(a + x) - 2  \left( x  \log(x)\right), & \text{if } a < 0, \, a + x > 1, \, x > 1, \\
(a + x)  \log(a + x) , & \text{if } a < 0, \, a + x > 1, \, x \leq 1, \\
-2  \left( x  \log(x)\right), & \text{if } a < 0, \, a + x \leq 1, \, x > 1, \\
0, & \text{if } a < 0, \, a + x \leq 1, \, x \leq 1.
\end{cases}
\]

Then for $a > 0, \, a + x > 1, \, x > 1$, consider\[
g(x) = (a + x) \log (a + x) - 2 \left(  a \log a + x \log x \right).
\]

For \( x = y - a \), we define:

\[
h(y) = g(x) = y \log y - 2 \left(  a \log a + (-a + y) \log (-a + y) \right).
\]

The derivative of \( h(y) \) is:

\[
h'(y) = -2 (\log a - \log (-a + y)),
\]

and setting \( h'(y) = 0 \) gives the condition \( a = y/2 \).

Substituting this value into \( h(y) \), we obtain:

\[
h(y) =  y \log 4 - y \log y.
\] Then the maximum of this expression is for $y=4/e$. Then $h(4/e)=4/e$ and this is maximum of $g$. Similarly we consider the other cases.
\end{proof}

To prove Theorem~\ref{zyg}, we need the following lemma
\begin{lemma}\label{larmi}
If $f=u+i v=g+\bar h$ is quasiregular, then  $$\Delta |f|\le K^2\Delta (u\log u)$$ provided that $u$ is positive.
\end{lemma}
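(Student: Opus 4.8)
The plan is to prove the pointwise inequality by computing both Laplacians explicitly via the Wirtinger calculus and then comparing them termwise. Since $u = \mathrm{Re}\,f > 0$, we have $|f| = \sqrt{u^2+v^2} \ge u > 0$ everywhere, so $f$ never vanishes and $|f|$ is smooth; this both legitimizes the computation of $\Delta |f|$ and supplies the crucial lower bound $|f| \ge u$ that I will use to pass from a denominator of $|f|$ to a denominator of $u$. Throughout I write $\Delta = 4\,\partial_z\partial_{\bar z}$ and record $f_z = g'$, $f_{\bar z} = \overline{h'}$, together with $\mathrm{Re}\,f = \mathrm{Re}(g+h)$ so that $u$ is the real part of the holomorphic function $\varphi = g+h$.

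The first computation, $\Delta(u\log u)$, is short. Because $u$ is harmonic, the chain rule for the Laplacian gives $\Delta F(u) = F''(u)\,|\nabla u|^2$ for any smooth $F$, and with $F(t)=t\log t$ one has $F''(t)=1/t$. Since $|\nabla u| = |\varphi'| = |g'+h'|$, this yields
\begin{equation*}
\Delta(u\log u) = \frac{|\nabla u|^2}{u} = \frac{|g'+h'|^2}{u}.
\end{equation*}
The second computation, $\Delta|f|$, is the technical heart. Applying $\partial_z\partial_{\bar z}$ to $|f|=(f\bar f)^{1/2}$ and simplifying using $f_z=g'$, $f_{\bar z}=\overline{h'}$, $\bar f_z = h'$, $\bar f_{\bar z}=\overline{g'}$, I expect to arrive at
\begin{equation*}
\Delta|f| = \frac{|g'|^2+|h'|^2}{|f|} - \frac{2\,\mathrm{Re}\!\left(f^2\,\overline{g'}\,h'\right)}{|f|^3},
\end{equation*}
which one can sanity-check against the classical formula $\Delta|g| = |g'|^2/|g|$ in the holomorphic case $h=0$.

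With these two formulas in hand the comparison is straightforward. I bound the cross term by $-2\,\mathrm{Re}(f^2\overline{g'}h') \le 2|f|^2|g'||h'|$, which gives $\Delta|f| \le (|g'|+|h'|)^2/|f|$, and then replace $|f|$ by $u$ in the denominator using $|f|\ge u$. Finally I invoke quasiregularity in the form $|h'|\le k|g'|$ with $k=(K-1)/(K+1)$: this forces $|g'|+|h'| \le (1+k)|g'|$ and $|g'+h'| \ge (1-k)|g'|$, whence $(|g'|+|h'|)^2 \le \tfrac{(1+k)^2}{(1-k)^2}|g'+h'|^2 = K^2|g'+h'|^2$. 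Chaining these estimates delivers $\Delta|f| \le K^2|g'+h'|^2/u = K^2\Delta(u\log u)$, as required.

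The main obstacle is the derivation of the formula for $\Delta|f|$: the square root makes the Wirtinger bookkeeping delicate, and the correct sign and size of the cross term $-2\,\mathrm{Re}(f^2\overline{g'}h')/|f|^3$ is exactly what must survive in order for the final inequality to close. Once that formula is established, the remaining steps—the cross-term estimate, the passage $|f|\ge u$, and the quasiregular comparison $(|g'|+|h'|)^2 \le K^2|g'+h'|^2$—are routine.
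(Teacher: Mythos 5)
Your proposal is correct and takes essentially the same approach as the paper: your formula $\Delta|f| = \frac{|g'|^2+|h'|^2}{|f|} - \frac{2\,\mathrm{Re}\left(f^2\overline{g'}h'\right)}{|f|^3}$ is exactly the paper's perfect-square identity $\Delta|f| = \bigl|g' - \tfrac{f}{\bar f}h'\bigr|^2/|f|$ in expanded form, and your chain of estimates (bounding the cross term by $2|f|^2|g'||h'|$, using $|f|\ge u$, and converting $(|g'|+|h'|)^2 \le K^2|g'+h'|^2$ via $|h'|\le k|g'|$) coincides with the paper's bound $\frac{(|g'|+|h'|)^2}{(|g'|-|h'|)^2}\cdot\frac{u}{|f|}\le K^2$. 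The only differences are cosmetic: you keep the expanded cross-term form rather than the perfect square, and you make explicit the point $|f|\ge u>0$ (hence $f\neq 0$ and $|f|$ smooth) that the paper leaves implicit.
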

\begin{proof}[Proof of Lemma~\ref{larmi}] Since $u=\mathrm{Re}\,(g+h)$, it follows that $|\nabla u|=|g'+h'|$. Further

$$|f|= ((g+\bar h)(\bar g +h))^{1/2}.$$
Thus
$$|f|_z= \frac{1}{2}((g+\bar h)(\bar g +h))^{1/2-1} (g'(\bar g +h)+(g+\bar h)h')$$
and
$$|f|_{z \bar z}=-\frac{1}{4}((g+\bar h)(\bar g +h))^{1/2-2} |g'(\bar g +h)+(g+\bar h)h'|^2$$ $$+\frac{1}{2}((g+\bar h)(\bar g +h))^{1/2-1}(|g'|^2+|h'|^2).$$
Then, after some simple manipulations we get  $$\Delta |f|=4|f|_{z \bar z} =\frac{|g'-h'{ f}/{\bar{f}} |^2}{|f|}.$$
Further $$\Delta (u\log u)=\frac{|\nabla u|^2}{u}=\frac{|g'+h'|^2}{u}.$$
Then
$$\frac{\Delta |f|}{\Delta (u \log u)}=\frac{|g'-\frac{ f^2}{|f|^2} h'|^2}{|g'+h'|^2}\frac{u}{|f|}\le \frac{(|g'|+|h'|)^2}{(|g'|-|h'|)^2}\frac{u}{|f|}\le K^2.$$
\end{proof}
\begin{proof}[Proof of Theorem~\ref{zyg}] By \eqref{green}, we have $$|f(0)|=\int_{\mathbb{T}}|f(rz)|\frac{|dz|}{2\pi}-\frac{1}{2\pi}\int_{|z|<r}\Delta |f(z)| \log \frac{r}{|z|}dxdy.$$
Let $\mathbb{D}_r=\{z: |z|<r\}$.
Now,  because $|f(0)|=u(0)$, and using again Proposition~\ref{green},
\[\begin{split}
\int_{\mathbb{T}}|f(rz)|\frac{|dz|}{2\pi}&=u(0)+\frac{1}{2\pi}\int_{\mathbb{D}_r}\Delta |f(z)| \log \frac{r}{|z|}dxdy
\\&\le u(0)+ K^2\frac{1}{2\pi} \int_{\mathbb{D}_r} \Delta (u(z) \log u(z))\log\frac{r}{|z|} dxdy\\&=
 u(0)+K^2\left(-u(0)\log u(0) +\frac{1}{2\pi} \int_{\mathbb{T}} u(rz) \log u(rz)|dz|\right)\\&=\Phi(u(0))+
K^2\frac{1}{2\pi} \int_{\mathbb{T}} u(rz) \log u(rz)|dz|,\end{split}\] where $$\Phi(\xi):= \xi -K^2 \xi \log \xi.$$ Then $
\Phi$ attains its maximum for  $\xi =u(0)=e^{-1+1/K^2}$, which is equal to $ K^2 e^{-1+1/K^2}.$ This implies \eqref{zygmund}.
\end{proof}

\begin{proof}[Proof of Theorem~\ref{zyg44}]
For any fixed $y\neq 0$ we assume that $\{U_{1},\ldots,U_{n-1},N\}$
is a system of mutually orthogonal vectors of the unit norm, where $N=\frac{y}{|y|}$ and
the vectors $\{U_{1},\ldots,U_{n-1}\}$ are arbitrarily chosen.
For a differentiable mapping $h$ defined in domain $\mathbb{B}$
define
 $h_{N}(x)=Dh(x)N$ and set $h_{U_{i}}(x)=Dh(x)U_{i}$ for $i=1,\ldots,n-1$.
Since the Hilbert-Schmidt norm of $Dh$ is independent of basis,
we have
\begin{equation}\label{eq-2.1}
\|Dh\|^{2}
=|h_{N}|^{2}+|h_{U_{1}}|^{2}+|h_{U_{2}}|^{2}+\cdots+|h_{U_{n-1}}|^{2}.
\end{equation}
Let $S(x)=\frac{f(x)}{|f(x)|}$ where $f(x)\neq 0$ because $f_1(x)>0$. Fix $x$ and let  $y=f(x)$ and $N=y/\|y\|$. Then
\begin{equation}\label{eq-2.6}\begin{split} \|DS(x)\|^2
&=|S_{N}(x)|^{2}+|S_{U_{1}}(x)|^{2}+ |S_{U_{2}}(x)|^{2}+\cdots+|S_{U_{n-1}}(x)|^{2}
\\&=|S_{U_{1}}(x)|^{2}+ |S_{U_{2}}(x)|^{2}+\cdots+|S_{U_{n-1}}(x)|^{2},
\end{split}
\end{equation}
because $S_N(x)=0.$ Namely   \[
\sum_{k=1}^{n} \partial_l S_k (x) \cdot N_k (x) = \frac{1}{2} \sum_{k=1}^{n} \partial_l (S_k^2 (x)) = \frac{1}{2} \partial_l \sum_{k=1}^{n} (S_k (x))^2 = \frac{1}{2} \partial_l |S(x)|^2 = 0, \quad l = 1,2, \dots, n.
\]

Since $$S_{U_j}=|f|^{-1} f_{U_j}- |f|^{-3}  \left<f,f_{U_j}\right>f,$$ it follows that $$|S_{U_j}|^2=\frac{|f_{U_j}|^2}{|f|^2}-\frac{\left<f,f_{U_j}\right>^2}{|f|^4}\le \frac{|f_{U_j}|^2}{|f|^2}.$$
Further by straightforward computation (see e.g. \cite[Lemma~1.4]{jmaa1}), $$\Delta |f(x)| =|f(x)|  \|DS(x)\|^2$$ and $$\Delta (f_1\log f_1)=\frac{|\nabla f_1|^2}{f_1}.$$
Since $F$ is $K-$quasiregular,  we have $$|f_{U_j}|\le K|\nabla f_1|,$$ for every $j$.
Thus
\begin{equation}\label{ff1}\frac{\Delta |f|}{\Delta (f_1\log f_1)} \le K^2 (n-1).\end{equation}
\bigskip
Then by \eqref{green} we have
$$\|f\|_1 =|f(0)|+c_n\int_{\mathbb{B}}\Delta|f(x)|\left(|x|^{2-n}-1\right)dV(x),$$ and for $u=f_1$

$$\int_{\mathbb{S}}u\log u d\sigma(\zeta)  =u(0)\log u (0)+c_n\int_{\mathbb{B}}\Delta(u\log u )\left(|x|^{2-n}-1\right)dV(x).$$
Further, we have  $$\|f\|_1-|f(0)|\le K^2(n-1)\left(\int_{\mathbb{S}}\left(u(\zeta)\log u(\zeta)-u(0)\log u (0)\right) d\sigma(\zeta)\right).$$
Then as in proof of Theorem~\ref{zyg}, if $|f(0)|=u(0)$, we have $$\|f\|_1\le K^2(n-1)( e^{-1+K^{-2}/(n-1)}+\|u\log^+u \|_1).$$
Prove now the sharpness part.


Let $f(x) = ax + (1,0,\dots,0)$, $0<a<1$, and define $$X(f)=\|f\|_1-|f(0)|,$$ and $$Y(f)=\left(\int_{\mathbb{S}}\left(u(\zeta)\log u(\zeta)-u(0)\log u (0)\right) d\sigma(\zeta)\right).$$

Then for  $C_n=\frac{\Gamma\left[\frac{n}{2}\right]}{\sqrt{\pi } \Gamma\left[\frac{1}{2} (-1+n)\right]}$ we have $$X(f)=C_n \int_0^{\pi} \sin^{n-2}t  \left(\sqrt{1+a^2+2 a \cos t}-1\right)dt$$

and $$Y(f)=C_n \int_0^{\pi} \sin^{n-2}t (1+a\cos t)\log(1+a\cos t)dt.$$

Since $$\left(\sqrt{1+a^2+2 a \cos t}-1\right)=a\cos(t) +\frac{ a^2}{2} \sin^2 t+o(a^2)$$

and $$(1+a\cos t)\log(1+a\cos t)=a\cos t +\frac{ a^2}{2} \cos^2 t + o(a^2),$$ we get

$$X(f)= C_n\frac{a^2 \sqrt{\pi } \Gamma\left[\frac{1+n}{2}\right]}{2 \Gamma\left[1+\frac{n}{2}\right]}+o(a^2)=\frac{a^2 (n-1)}{2 n}+o(a^2)$$

$$ Y(f)= C_n \frac{a^2 \sqrt{\pi } \Gamma\left[\frac{1}{2} (-1+n)\right]}{4 \Gamma\left[1+\frac{n}{2}\right]}+o(a^2)=\frac{a^2 }{2 n}+o(a^2).$$
Then we obtain  $$\lim_{a\to 0}\frac{X(f)}{Y(f)}=n-1.$$ Hence the constant $n-1$ is the best possible for the case $K=1$.
\end{proof}
\subsection*{Funding} This research received no specific grant from any funding agency in the public, commercial, or not-for-profit sectors.
\subsection*{Data availibility} Data sharing is not applicable to this article since no data sets were generated or analyzed
\section*{Ethics declarations}
The author declares that he has not conflict of interest.

\subsection*{Acknowledgment}
 I sincerely thank the referee for their valuable comments and insightful suggestions, which have significantly improved the quality and clarity of this manuscript.

\normalsize


\begin{thebibliography}{99}

\bibitem{chenarxiv}
S. L. Chen and M.  Huang, Riesz type theorems for $\kappa$-pluriharmonic mappings, invariant harmonic quasiregular mappings and harmonic quasiregular mappings, arXiv:2310.15452.

\bibitem{duren}
P. Duren, \emph{Theory of $H^p$
 spaces.}
Pure and Applied Mathematics (Academic Press) 38. New York and London: Academic Press. XII, 258 p. (1970).




\bibitem{Her}
L. H{\"o}rmander, \emph{Notions of convexity}, Progress in
Mathematics, Birkh\"auser Boston Inc., Boston, MA, 1994.

\bibitem{studia}
B. Hollenbeck,  N. J. Kalton, I. E. Verbitsky,
\emph{Best constants for some operators associated with the Fourier and Hilbert transforms. }
Studia Math. 157 (2003), no. 3, 237--278.


\bibitem{tams}
D.  Kalaj, On Riesz type inequalities for harmonic mappings on the unit disk, \textit{Trans. Am. Math. Soc.} {\bf 372} (2019), 4031--4051.
\bibitem{kalajarxiv2} D. Kalaj, Riesz and Kolmogorov inequality for harmonic quasiregular mappings, J. Math. Anal. Appl. 542, No. 1, Article ID 128767, 15 p. (2025).
\bibitem{kalajarxiv3} D. Kalaj, On M. Riesz conjugate function theorem for harmonic functions,  Potential Anal (2024). https://doi.org/10.1007/s11118-024-10150-8.
\bibitem{jmaa1}
D.  Kalaj, {On the univalent solution of {PDE} $\Delta u=f$ between spherical annuli},
\textit{J. Math. Anal. Appl.},
{\bf 327}, No. 1, (2007) 1--11.


\bibitem{aimzhu} J. Liu and J. F. Zhu, Riesz conjugate functions theorem for harmonic,
quasiconformal mappings, \textit{Adv.  Math.} {\bf 434} (2023), 109321.



\bibitem{mel} P. Melentijevi\'c, Hollenbeck-Verbitsky conjecture on best constant inequalities for analytic and co-analytic projections, \textit{Math. Ann.} Volume 388, pages 4405-4448, (2024).


\bibitem{melmar}
P. Melentijevi\'c, M. Markovi\'c, Best Constants in Inequalities Involving Analytic and Co-Analytic Projections and Riesz's Theorem in Various Function Spaces,
  \textit{Potential Anal.} (2022). https://doi.org/10.1007/s11118-022-10021-0.

\bibitem{pavlovic}
M. Pavlovi\'c,
Function classes on the unit disc. An introduction. 2nd revised and extended edition,
De Gruyter Studies in Mathematics 52. Berlin: De Gruyter (ISBN 978-3-11-062844-9/hbk; 978-3-11-063085-5/ebook). xv, 553 p. (2019).

\bibitem{Pi} S. K. Pichorides, On the best values of the constants in the theorems of M. Riesz, Zygmund and
Kolmogorov, \textit{Studia Math.} {\bf 44} (1972), 165--179.

\bibitem{Zyg}
A. Zygmund,
Trigonometric series. Volumes I and II combined. With a foreword by Robert Fefferman. 3rd ed.
Cambridge Mathematical Library. Cambridge: Cambridge University Press (ISBN 0-521-89053-5/pbk). xiii, 364 p. (2002).

\end{thebibliography}
\end{document}